			\newtheorem{theorem}{Theorem}
			\newtheorem{lemma}{Lemma}
			\newtheorem{proposition}{Proposition}
			\theoremstyle{definition}
			\newtheorem{example}{Example}
			\theoremstyle{remark}
			\newtheorem{remark}{Remark}
			\DeclareMathOperator{\Pn}{Poisson}
			\DeclareMathOperator{\Gam}{Gamma}
			\providecommand{\Li}[2]{{\rm Li}_{#1}\left(#2\right)}
			\DeclareMathOperator{\Var}{Var}
			\providecommand{\Prob}[1]{\mathbb{P}\{#1\}}
			\providecommand{\cum}[1]{\mathscr C\left(#1\right)}
			\providecommand{\cumc}[2]{\mathscr C\left(#1\middle| #2\right)}
			\providecommand{\cumh}[2]{\mathscr C_{#1}\left(#2\right)}
			\providecommand{\cumhc}[3]{\mathscr C_{#1}\left(#2\middle| #3\right)}
			\providecommand{\card}[1]{\texttt{\#}#1}
			\newcommand{\pset}[1]{\mathscr{P}_{#1}}
			\newcommand{\Ex}{\mathbb{E}}
\begin{document}
	\title{On the number of segregating sites}
	\author{Helmut H.~Pitters}
	\address{Institute of Mathematical Stochastics\\
		TU Dresden}
	\email{helmut.pitters@tu-dresden.de}

%			{\tt Draft version. Do not distribute.}
	
			\begin{abstract}
			  Consider a sample of size $n$ drawn from a large, 
neutral population of haploid individuals subject to mutation at rate $\theta/2$ 
whose genealogy is governed by Kingman's $n$-coalescent. Let $S_n$ count the 
number of segregating sites in this sample under the infinitely many sites model 
of Kimura. For fixed sample size $n$ the main result about $S_n$ is due 
to Watterson~\cite{Watterson1975} who computed its mean and variance as
			  \begin{align*}
			    \Ex S_n &= \theta H_{n-1},\\
			    \Var(S_n) &=\theta H_{n-1}+\theta^2H_{n-1}^{(2)},
			  \end{align*}
			  where $H_n^{(b)}\coloneqq \sum_{k=1}^{n-1}1/k^b$ ($n, b\in\mathbb N$) denotes the generalized harmonic number, and $H_n\coloneqq H_n^{(1)}$ is the (regular) harmonic number. In our main result, Theorem~\ref{thm:sites_cumulants}, we generalize this fact and show that the $i$th cumulant of $S_n$ is given by
			  \begin{align*}
			    \cumh{i}{S_n} &= \sum_{k=1}^{n-1}\Li{1-i}{\frac{\theta}{k+\theta}}=\sum_{b=1}^{i}{i\brace b}(b-1)!\theta^bH_{n-1}^{(b)}\qquad (i\in\mathbb N),
			  \end{align*}
			  where $\Li{n}{u}\coloneqq \sum_{l\geq 1} u^l/l^n$ is the polylogarithm of order $n$, and ${i\brace b}$ denotes the $(i, b)$th Stirling number of the second kind. We find in passing an explicit expression for the cumulants of the negative binomial distribution in terms of the polylogarithm. This seems to be the first explicit formula in the literature for the cumulant of arbitrary order of the negative binomial distribution.
			  
			  As an application of this result we obtain straightforward proofs of the Law of Large Numbers and the Central Limit Theorem for $S_n$.
			\end{abstract}
		\maketitle
			
			\section{Introduction}
			Geneticists often study populations by drawing inferences about the evolution in the past from observations in the present. To be more specific, in neutral populations the genealogy of a sample of $n$ individuals is often approximated by Kingman's $n$-coalescent (and there are rigorous mathematical results justifying this approximation). A verbal description of this coalescent process is as follows. Picture the individuals in the sample labeled $1, \ldots, n$, with a line of descent emanating from each individual and growing at unit speed. At rate one, any pair of individuals merges, i.e.~their lines of descent merge into a single line representing the most recent common ancestor of this pair. After the first merger the process continues with $n-1$ lines of descent following the same dynamics as before. It is clear from this description that the genealogy of a sample of $n$ individuals may be represented as a (random) rooted tree with $n$ leafs labeled $1, \ldots, n.$
			
			In addition to the genealogy mutations are modeled as follows. Conditionally given the genealogical tree (or coalescent tree), run a Poisson process at constant rate $\theta/2>0$, the so-called mutation rate, along the branches of the tree. Each jump of the Poisson process is then interpreted as a mutation affecting any leaf (the individual in the sample) with the property that the unique path connecting the leaf to the root of the tree crosses said mutation.
			
			We restrict ourselves to the infinitely many sites model of Kimura~\cite{Kimura1969}. According to Kimura's model each mutation is thought of as acting on one of infinitely many sites, i.e.~each jump of the Poisson process on the tree introduces a mutation on a site where no mutation was seen before. For detailed expositions of probabilistic models for the evolution of DNA sequences the interested reader is referred to Durrett~\cite{Durrett2008}, Etheridge~\cite{Etheridge2011}, and Tavar\'{e}~\cite{Tavare2004}.

%			For any sequence $(i_j)_{j\in [d]}$ we write $\abs i\coloneqq \sum_j i_j$ and $\inorm i\coloneqq \sum_j ji_j.$
%			
%			For non-negative integers $n, k\in\mathbb{N}$ with $k\leq n$ let $\lah{n}{k}\coloneqq {n\choose k}\frac{(n-1)!}{(k-1)!}$ denote the so-called \emph{Lah numbers}, also known as Stirling numbers of the third kind. They count the number of partitions $\pi$ of $[n]$ with $k$ linearly ordered blocks, i.e.~in each block of $\pi$ the elements are ordered.
%			
%			For a non-negative integer sequence $v=(v_k)_{k\geq 1}$ let $B_{n, k}(v)\coloneqq \sum_{\pi\in\pset{[n]}}\prod_{B\in\pi} v_{\card B}$ denote the $(n,k)$th partial Bell polynomial. If $w=(w_k)_{k\geq 1}$ is another non-negative integer sequence, we denote by $B_n(w, v)\coloneqq \sum_{k=1}^n w_kB_{n,k}(v)$ the $n$th complete Bell polynomial. From the interpretation of the Lah numbers it follows that $\lah{n}{k}=B_{n,k}(\bullet!),$ where $\bullet!$ is a shorthand for $(k!)_{k\geq 1}.$
			
			\section{Some known results}
			Consider Kingman's $n$-coalescent $\Pi_{n}=\{\Pi_n(t), t\geq 0\},$ where $\Pi_n(t)$ is a partition of $[n]\coloneqq \{1, \ldots, n\}$ defined by placing any two individuals into the same block iff their lines of descent have merged up until and including time $t$. If $\tau_k$ denotes the time spent in a state of $k$ blocks by $\Pi_n,$ then $\tau_2, \ldots, \tau_n$ is a sequence of independent exponentials such that $\tau_k$ has parameter $\binom{k}{2}.$
			
			A site is called segregating if it differs in at least two individuals in the sample. We can write the number of segregating sites, $S_{n},$ as
			\begin{align}\label{eq:sites_sum}
			  S_{n} &= \sum_{k=2}^{n} G_{k},
			\end{align}
			where $G_{k}$ counts the number of segregating sites appearing while $\Pi_n$ has $k$ blocks. Drawing on~\cite{Tavare2004} and~\cite{Durrett2008} we briefly recall the basic results known about $S_n$ which go back to Watterson~\cite{Watterson1975}. In terms of the coalescent tree, $G_{k}$ is the number of mutations falling on the $k$ parts of branches of length $\tau_{k}$ each, hence the $G_{2}, \ldots, G_{n}$ are independent. If $\Pi_n$ is in a state of $k$ blocks, the probability to see a mutation before a merger is $\theta/(\theta+k-1)$ since a mutation occurs at rate $k\theta/2$, whereas a merger happens at rate $\binom{k}{2}.$ Consequently, $G_k$ is geometric with success probability $(k-1)/(\theta+k-1)$ (and support $\mathbb N_0$) and mean $\theta/(k-1)$. It is now immediate that $S_n$ has mean $\theta H_{n-1}$ and variance $\theta^2H_{n-1}^{(2)}+\theta H_{n-1}.$ The probability generating function of $S_n$ is given by
			\[\Ex[s^{S_n}] = \prod_{k=1}^{n-1} \frac{k}{k+\theta(1-s)}\qquad (s\in\mathbb R),\]
			and the probability mass function is
			\[\Prob{S_n=m} = \frac{n-1}{\theta}\sum_{k=1}^{n-1} (-1)^{k-1} \binom{n-2}{k-1}\left(\frac{\theta}{k+\theta}\right)^{m+1}\qquad (m\in\mathbb N_0).\]
			
			Alternatively, the conditional distribution of $G_k$ given $\tau_k$ is Poisson with parameter $\theta k\tau_{k}/2$ for which we write
			\begin{align}
			  (G_{k}|\tau_{k}) \sim \Pn(\theta k\tau_{k}/2).
			\end{align}
			This second perspective on the distribution of $G_k$ as a mixture distribution turns out to be fruitful for our study of higher cumulants of $S_n.$
			
			With this geometric view of $S_n,$ and its representation~\eqref{eq:sites_sum} as a sum of indepedent geometric random variables, in analogy to the classical Law of Large Numbers for i.i.d.~random variables it is natural to ask whether $S_n$ can be rescaled (by its mean, say),  in such a way as to converge to some non-degenerate limit. In fact, if we rescale $S_n$ by its mean $\Ex S_n,$ this limit has to be deterministic, hence equal to one, since $\Var(S_n/\Ex S_n)\to 0$ as $n\to\infty.$ To see this, it is enough to show that for any $\epsilon>0$
			\begin{align}
			  \mathbb P\left\{\left|\frac{S_n}{\Ex S_n}-1\right|\geq \epsilon\right\}\leq\epsilon
			\end{align}
			for large enough $n$. Notice first that $H_n\sim\log n,$ and $H_n^{(b)}$ converges to the Riemann zeta function $\zeta(b)\coloneqq\sum_{k=1}^\infty 1/k^b$ as $n\to\infty$ for $b>1$. Here for any two real sequences $(a_n)$ and $(b_n)$ $a_n\sim b_n$ denotes asymptotic equality, i.e.~$\lim_n a_n/b_n=1$. Fix $\epsilon>0$ arbitrarily. Choose an integer $k$ such that $k^2>1/\epsilon$. There exists $N\in\mathbb N$ such that $\sqrt{\Var(S_n/\Ex S_n)}k\leq\epsilon$ for all $n\geq N$, since $\sigma_n^2\coloneqq\Var(S_n/\Ex S_n)=(\theta^2H_{n-1}^{(2)}+\theta H_n)/\theta^2H_n^2\sim 1/\theta\log n$ as $n\to\infty.$
			 Applying Chebyshev's inequality shows
			 \begin{align*}
			   \mathbb P\left\{\left|\frac{S_n}{\Ex S_n}-1\right|\geq \epsilon\right\}\leq \mathbb P\left\{\left|\frac{S_n}{\Ex S_n}-1\right|\geq \sigma_nk\right\}\leq \frac{1}{k^2}\leq\epsilon
			 \end{align*}
			 for all $n\geq N,$ and we have proved the following Law of Large Numbers.
			 \begin{theorem}[Law of Large Numbers]\label{thm:lln}
				As $n\to\infty$ we have convergence
				\begin{align}
				  \frac{S_n}{\theta H_{n-1}} \to 1\text{ almost surely.}
				\end{align}
			\end{theorem}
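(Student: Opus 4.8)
The discussion preceding the statement yields only convergence in probability of $S_n/\Ex S_n$ towards $1$; the plan is to upgrade this to almost sure convergence by the classical \emph{subsequence-plus-monotonicity} argument. Realise the family of $n$-coalescents $(\Pi_n)_{n\geq 2}$ together with the mutations on their trees on a single probability space via the usual consistency coupling. Since adjoining an individual to a sample can only preserve or create segregating sites, the sequence $(S_n)_{n\geq 2}$ is then nondecreasing, and $\Ex S_n=\theta H_{n-1}$ is nondecreasing as well; these two monotonicities are what make the argument run.

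\emph{Step 1 (a rapidly growing subsequence).} Put $n_j\coloneqq 2^{j^2}$, so that $\log n_j=j^2\log 2$ and hence, by the variance formula already recorded, $\sigma_{n_j}^2\coloneqq\Var\bigl(S_{n_j}/\Ex S_{n_j}\bigr)\sim 1/(\theta j^2\log 2)$, which is summable in $j$. Chebyshev's inequality therefore gives, for every fixed $\epsilon>0$,
\[\sum_{j\geq 1}\Prob{\bigl\lvert S_{n_j}/\Ex S_{n_j}-1\bigr\rvert\geq\epsilon}\;\leq\;\frac{1}{\epsilon^{2}}\sum_{j\geq 1}\sigma_{n_j}^{2}\;<\;\infty,\]
so by the Borel--Cantelli lemma, almost surely $\lvert S_{n_j}/\Ex S_{n_j}-1\rvert<\epsilon$ for all large $j$. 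Intersecting these probability-one events over $\epsilon=1/m$, $m\in\mathbb N$, shows $S_{n_j}/\Ex S_{n_j}\to 1$ almost surely along the subsequence.

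\emph{Step 2 (filling the gaps).} For $n_j\leq n\leq n_{j+1}$ the two monotonicities give the sandwich
\[\frac{S_{n_j}}{\Ex S_{n_{j+1}}}\;\leq\;\frac{S_n}{\Ex S_n}\;\leq\;\frac{S_{n_{j+1}}}{\Ex S_{n_j}}.\]
Because $H_m\sim\log m$ and $\log n_j/\log n_{j+1}=j^2/(j+1)^2\to 1$, we have $\Ex S_{n_j}/\Ex S_{n_{j+1}}=H_{n_j-1}/H_{n_{j+1}-1}\to 1$; together with Step 1 this forces both ends of the sandwich to tend to $1$ almost surely, whence $S_n/\Ex S_n\to 1$ almost surely. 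As $\Ex S_n=\theta H_{n-1}$, this is precisely the claim.

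I expect the only delicate point to be the calibration of the subsequence: $(n_j)$ must grow fast enough that $\sum_j 1/\log n_j<\infty$ (for Step 1) yet slowly enough that $\log n_{j+1}/\log n_j\to 1$ (for Step 2), and $n_j=2^{j^2}$ does both; any choice with $\log n_j\sim j^{1+\delta}$, $\delta>0$, would serve equally well. Alternatively the subsequence can be avoided altogether: the $G_k$ are independent with $\Var(G_k)=\theta(k+\theta-1)/(k-1)^2\asymp 1/k$ and $b_n\coloneqq\Ex S_n=\theta H_{n-1}\uparrow\infty$, so $\sum_{k\geq 2}\Var(G_k)/b_k^2$ has terms of order $1/(k(\log k)^2)$ and converges, and Kolmogorov's strong law for sums of independent random variables then yields $(S_n-\Ex S_n)/\Ex S_n\to 0$ almost surely in one stroke.
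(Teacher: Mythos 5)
Your proof is correct, and it takes a genuinely different route from the paper --- in fact it proves strictly more than the paper's own argument does. The paper twice derives the weaker statement: in Section 2 a Chebyshev bound gives $\Prob{\lvert S_n/\Ex S_n-1\rvert\geq\epsilon}\to 0$, i.e.\ convergence in probability, and the cumulant proof in Section~\ref{sec:applications} gives convergence in distribution to the constant $1$, which is again only convergence in probability; the paper then passes to ``almost surely'' by asserting that almost sure convergence to a constant is equivalent to convergence in distribution, which is false (the equivalence holds for convergence in probability, not almost sure convergence). Your argument supplies exactly the missing ingredient: you put all the $S_n$ on one probability space (without which the almost sure statement is not even well posed), exploit monotonicity of $S_n$ and of $\Ex S_n=\theta H_{n-1}$, and upgrade the Chebyshev bound to almost sure convergence by Borel--Cantelli along $n_j=2^{j^2}$ plus the sandwich $S_{n_j}/\Ex S_{n_{j+1}}\leq S_n/\Ex S_n\leq S_{n_{j+1}}/\Ex S_{n_j}$; the calibration $\sum_j 1/\log n_j<\infty$ together with $\log n_{j+1}/\log n_j\to 1$ is checked correctly. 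Your closing alternative is even cleaner and stays entirely within the paper's own representation~\eqref{eq:sites_sum}: realise $(G_k)_{k\geq 2}$ as a single independent sequence, note $\Var(G_k)=\theta(k-1+\theta)/(k-1)^2\sim\theta/k$ and $\sum_k\Var(G_k)/(\theta H_{k-1})^2<\infty$, and apply Kolmogorov's strong law for independent non-identically distributed summands; this avoids the subsequence and the coalescent consistency coupling altogether. What the paper's cumulant method buys instead is uniformity of technique (the same computation drives both Theorem~\ref{thm:lln} and Theorem~\ref{thm:clt}) at the cost of only reaching convergence in probability; your version buys the actual almost sure conclusion stated in the theorem.
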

			\begin{remark}
				The Law of Large Numbers, Theorem~\ref{thm:lln}, suggests that $S_n/H_{n-1}$ could be used as an unbiased estimator for $\theta.$ It is known as \textit{Watterson's estimator}.
			\end{remark}
			Moreover, there is a Central Limit Theorem for $S_n.$
			\begin{theorem}[Central Limit Theorem]\label{thm:clt}
				As $n\to\infty$ we have convergence
				\begin{align}
				  \frac{S_n-\Ex S_n}{\sqrt{\Var(S_n)}}\to_d N,
				\end{align}
				(for all moments and in distribution) where $N$ is a standard Gaussian random variable.
			\end{theorem}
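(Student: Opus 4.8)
The plan is to read off the central limit theorem directly from the explicit cumulant formula of Theorem~\ref{thm:sites_cumulants}, exploiting that cumulants transform simply under affine maps and that the Gaussian law is characterised by its cumulant sequence (equivalently, is determined by its moments). Since $S_n$ is a finite sum of geometric random variables it has moments, hence cumulants, of every order. Write $\mu_n\coloneqq\Ex S_n=\theta H_{n-1}$ and $\sigma_n^2\coloneqq\Var(S_n)=\theta H_{n-1}+\theta^2H_{n-1}^{(2)}$, and set $S_n^\ast\coloneqq(S_n-\mu_n)/\sigma_n$. From $\cumh{1}{aX+b}=a\,\cumh{1}{X}+b$ and $\cumh{i}{aX+b}=a^i\,\cumh{i}{X}$ for $i\geq 2$ we get $\cumh{1}{S_n^\ast}=0$, $\cumh{2}{S_n^\ast}=1$, and $\cumh{i}{S_n^\ast}=\cumh{i}{S_n}/\sigma_n^{\,i}$ for $i\geq 3$.

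The first real step is to pin down the asymptotics of $\cumh{i}{S_n}$. By Theorem~\ref{thm:sites_cumulants}, and because ${i\brace 1}=1$ and $0!=1$,
\[
  \cumh{i}{S_n}=\sum_{b=1}^{i}{i\brace b}(b-1)!\,\theta^bH_{n-1}^{(b)}=\theta H_{n-1}+\sum_{b=2}^{i}{i\brace b}(b-1)!\,\theta^bH_{n-1}^{(b)}.
\]
For each fixed $b\geq 2$ the sequence $H_{n-1}^{(b)}$ increases to $\zeta(b)<\infty$, so the second sum is bounded by a constant depending only on $i$ and $\theta$; hence $\cumh{i}{S_n}=\theta H_{n-1}+O(1)$, and in particular $\cumh{i}{S_n}\sim\theta H_{n-1}$ as $n\to\infty$ for every fixed $i$. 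The case $i=2$ already gives $\sigma_n^2\sim\theta H_{n-1}\to\infty$.

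Consequently, for every fixed $i\geq 3$,
\[
  \cumh{i}{S_n^\ast}=\frac{\cumh{i}{S_n}}{\sigma_n^{\,i}}\sim\frac{\theta H_{n-1}}{(\theta H_{n-1})^{i/2}}=(\theta H_{n-1})^{1-i/2}\longrightarrow 0,
\]
because $1-i/2\leq-\tfrac12<0$. Thus every cumulant of $S_n^\ast$ converges to the corresponding cumulant $0,1,0,0,\dots$ of a standard Gaussian $N$. Since the $n$th moment is a fixed polynomial (a complete Bell polynomial) in the first $n$ cumulants, coordinatewise convergence of cumulants forces coordinatewise convergence of moments; as the normal law is moment-determinate, the method of moments (Fréchet--Shohat) promotes this to convergence in distribution, which together with the convergence of moments is exactly the asserted ``for all moments and in distribution'' statement.

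I do not anticipate a genuine obstacle: the Stirling-number bookkeeping is routine and the only external ingredient is the method of moments with moment-determinacy of the normal law. If one wishes to bypass the method of moments, the same estimates show $\log\Ex[e^{tS_n^\ast}]=\sum_{i\geq1}\cumh{i}{S_n^\ast}\,t^i/i!$ converges to $t^2/2$ for every real $t$, the contribution of the terms with $i\geq 3$ being $O(\sigma_n^{-1})$ on compact $t$-sets; establishing the requisite uniform bound on that series (using that $\sum_{b}{i\brace b}(b-1)!\,\theta^b$ grows only factorially in $i$) is the one place that would need a little extra care in that alternative route.
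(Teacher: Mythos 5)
Your proposal is correct and follows essentially the same route as the paper: apply Theorem~\ref{thm:sites_cumulants}, observe that $H_{n-1}^{(b)}\to\zeta(b)$ for $b\geq 2$ so that $\cumh{i}{S_n}\sim\theta H_{n-1}$ while $\Var(S_n)^{i/2}\sim(\theta H_{n-1})^{i/2}$, conclude that all standardized cumulants of order $i\geq 3$ vanish in the limit, and invoke the method of moments together with moment-determinacy of the Gaussian. Your write-up merely makes explicit (via the Bell-polynomial relation and Fr\'echet--Shohat) what the paper compresses into the remark that the $i$th moment is a continuous function of the first $i$ cumulants.
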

			This result can be proved using the triangular array form of the Central Limit Theorem, cf.~\cite[Theorem 1.23]{Durrett2008}. As an application of our main result, Theorem~\ref{thm:sites_cumulants}, we give a completely different proof via cumulants in Section~\ref{sec:results} for both the Law of Large Numbers as well as the Central Limit Theorem for $S_n$ that boils down to elementary calculations.
			
			\section{Preliminaries}\label{sec:preliminaries}
			Let us introduce some notation. A partition of a set $A$ is a set, $\pi$ say, of non-empty pairwise disjoint subsets of $A$ whose union is $A$. The members of $\pi$ are called the blocks of $\pi.$ Let $\card{A}$ denote the cardinality of $A$ and let $\pset{A}$ denote the set containing all partitions of $A.$
			%For any integer $n\in\mathbb{N}\coloneqq \{1, 2,\ldots \}$ we write $[n]\coloneqq \{1, \ldots, n\}$.

			Before we turn to the results, let us recall the notion of a cumulant. Let $X$ be
			a real random variable whose moment generating function $M_X(s) = \Ex e^{sX}$ exists
			for $|s| < \delta$ and some $\delta > 0$. Recall that the \emph{cumulant generating function} of $X$ is
			defined by $K_X(s)\coloneqq\log M_X (s)$. The coefficient $c_j$ in the power series expansion
			$K_X (s) = \sum_{j=1}^\infty c_j s^j /j!$ near $0$ is called the \emph{$j$th cumulant} of $X$, denoted $\cumh{j}{X}$.
			Notice that $\cumh{1}{X} = \Ex X$ is the expectation and $\cumh{2}{X} = \Var(X)$ the variance of
			$X$.
			
			\begin{example}
				\begin{enumerate}[a)]
					\item Normal distribution. Let $X$ be a mean $\mu,$ variance $\sigma^2>0$ normal random variable. Then $X$ has moment generating function $M_X(s)=\exp(\mu s+\sigma^2s^2/2),$ thus $K_X(s)=\mu s+\sigma^2s^2/2,$ and
					\begin{align*}
					  \cumh{j}{X} &= \begin{cases}
					    \mu & j=1\\
					    \sigma^2 & j=2\\
					    0 & \text{else.}
					  \end{cases}
					\end{align*}
					\item Poisson distribution. Let $N$ be a Poisson random variable with mean $\lambda>0.$ Then $K_N(s)=\lambda(e^s-1),$ therefore, for any $j\in\mathbb N$ $\cumh{j}{N} = \lambda.$
					\item Gamma distribution. Let $T$ be a random variable following a gamma distribution with parameters $\alpha$ and $\beta.$ Then for $s<\beta,$ $M_T(s)=(1-s/\beta)^{-\alpha},$ $K_T(s)=-\alpha\log(1-s/\beta),$ and therefore $\cumh{j}{T} = \alpha(j-1)!/\beta^j.$
				\end{enumerate}
			\end{example}
			We will repeatedly use the following properties of cumulants. For real random
			variables $X, Y$ and $j \in\mathbb N$, $a, b \in\mathbb R$ we have i) $\cumh{j}{X + b} = \cumh{j}{X}$ if $j \geq 2$, ii) $\cumh{j}{aX} = a^j \cumh{j}{X}$, and iii) the independence of $X, Y$ implies $\cumh{j}{X + Y} = \cumh{j}{X} + \cumh{j}{Y}$. More generally, consider a vector $(X_1, \ldots, X_d)$ of real random variables and suppose that the radius of convergence of its moment generating function $M_{(X_1, \ldots, X_d)}(s)\coloneqq \Ex\exp(\sum_{l=1}^d s_lX_l),$ $s\in\mathbb R^d,$ is $\delta>0.$ The cumulant generating function of $(X_1, \ldots, X_d)$ is defined to be $K_{(X_1, \ldots, X_d)}(s)\coloneqq\log M_{(X_1, \ldots, X_d)}(s),$ and the joint cumulant $\cumh{i_1, \ldots, i_d}{X_1, \ldots, X_d}$ of order $(i_1, \ldots, i_d)\in\mathbb N^d$ is the coefficient $c_{i_1, \ldots, i_d}$
			in the series expansion $K_{(X_1, \ldots, X_d)}(s)=\sum_{i_1, \ldots, i_d\geq 1}c_{i_1, \ldots, i_d}\prod_{j=1}^d (s_j^{i_j}/i_j!)$ in $s$ near $0$. Often in the literature $\cumh{1, \ldots, 1}{X_1, \ldots, X_d}$ is called the \emph{joint cumulant} of $(X_1, \ldots, X_d),$ and we also denote it by $\cum{X_1, \ldots, X_d}.$ Notice that in the special case where $X_1=X_2=\cdots=X_d=X$, $\cum{X_1, \ldots, X_d}=\cumh{d}{X}$. There are settings where the joint distribution of $(X_1, \ldots, X_d)$ may be intricate, but there exists another random quantity, $Y$ say, such that the conditional distribution $(X_1, \ldots, X_d|Y)$ is considerably ``simpler'' than the unconditional distribution, e.g.~in the sense that there is a simple generating model for the conditional distribution. A natural question to ask is whether the conditional cumulant $\cumc{X_1, \ldots, X_d}{Y},$ which denotes the cumulant of the conditional distribution of $X_1, \ldots, X_d$ given $Y,$ can be used to compute $\cum{X_1, \ldots, X_d}.$ For $d=2$ and $X_1=X_2=X$ the answer is given by $\cumh{2}{X}=\cumh{1}{\cumhc{2}{X}{Y}}+\cumh{2}{\cumhc{1}{X}{Y}}$, which is nothing but the familiar identity
			\begin{align*}
			  \Var(X) = \Ex\Var(X|Y) + \Var(\Ex[X|Y]),
			\end{align*}
			sometimes called the \emph{law of total variance}. The general identity is due to Brillinger~\cite{Brillinger1969}. Some authors call it the \emph{law of total cumulance}.
			\begin{proposition}[Law of total cumulance; Proposition 4.4 in~\cite{Speed1983}]\label{prop:total_cumulance}
				\begin{align}
					\cum{X_1, \ldots, X_d} &= \sum_{\pi\in\mathscr P_{[d]}} \cum{\cumc{X_b\colon b\in B}{Y}\colon B\in\pi}.
				\end{align}
			\end{proposition}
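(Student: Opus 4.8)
The plan is to derive the law of total cumulance from the classical Möbius-inversion formula relating joint moments and cumulants over the partition lattice, combined with the tower property of conditional expectation; this is, in essence, the partition-lattice computation of Speed~\cite{Speed1983}. Throughout, partition lattices are ordered by refinement, with minimum $\hat 0$ the partition into singletons and maximum $\hat 1$ the one-block partition, and $\mu$ denotes the Möbius function, which on $\pseti{m}$ satisfies $\mu(\sigma,\hat 1)=(-1)^{\abs{\sigma}-1}(\abs{\sigma}-1)!$.

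First I would record the two forms of the moment–cumulant relation: for real random variables $W_1,\dots,W_m$ with joint moment generating function finite near the origin,
\[
  \Ex\Bigl[\prod_{j=1}^{m}W_j\Bigr]=\sum_{\sigma\in\pseti{m}}\prod_{S\in\sigma}\cum{W_s\colon s\in S},
  \qquad
  \cum{W_1,\dots,W_m}=\sum_{\sigma\in\pseti{m}}\mu(\sigma,\hat 1)\prod_{S\in\sigma}\Ex\Bigl[\prod_{s\in S}W_s\Bigr],
\]
the second being the Möbius inverse of the first. The identical formulas hold with every $\Ex[\,\cdot\,]$ replaced by the conditional expectation $\Ex[\,\cdot\mid Y]$ and every $\cum{\,\cdot\,}$ by $\cumc{\,\cdot\,}{Y}$; these conditional versions will be applied blockwise below.

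Then I would start from the left-hand side. Set $Z_B\coloneqq\cumc{X_b\colon b\in B}{Y}$ for $B\subseteq[d]$. Expanding $\cum{X_1,\dots,X_d}$ by the inverted formula, applying the tower property $\Ex[\prod_{s\in S}X_s]=\Ex\bigl[\Ex[\prod_{s\in S}X_s\mid Y]\bigr]$ and then the conditional moment–cumulant formula inside each block $S$, I obtain
\[
  \cum{X_1,\dots,X_d}=\sum_{\sigma\in\pseti{d}}\mu(\sigma,\hat 1)\prod_{S\in\sigma}\Bigl(\sum_{\tau_S\in\pset{S}}\Ex\Bigl[\prod_{T\in\tau_S}Z_T\Bigr]\Bigr).
\]
Expanding the product over $S\in\sigma$, a choice of one $\tau_S\in\pset{S}$ per block of $\sigma$ is precisely a single partition $\tau\in\pseti{d}$ with $\tau\le\sigma$, with $\tau_S=\{T\in\tau\colon T\subseteq S\}$; after interchanging the order of summation this reads
\[
  \cum{X_1,\dots,X_d}=\sum_{\tau\in\pseti{d}}\;\sum_{\sigma\ge\tau}\mu(\sigma,\hat 1)\prod_{S\in\sigma}\Ex\Bigl[\prod_{T\in\tau\colon T\subseteq S}Z_T\Bigr].
\]

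The crux is to identify the inner sum for fixed $\tau$. The interval $[\tau,\hat 1]$ of $\pseti{d}$ is isomorphic, as a lattice, to the full partition lattice $\pset{\tau}$ of the (finite) set of blocks of $\tau$, via $\sigma\mapsto\rho\coloneqq\{\{T\in\tau\colon T\subseteq S\}\colon S\in\sigma\}$, and this isomorphism preserves the Möbius function, so that $\mu_{\pseti{d}}(\sigma,\hat 1)=\mu_{\pset{\tau}}(\rho,\hat 1)$; moreover the block $S\in\sigma$ corresponds to the block $R=\{T\in\tau\colon T\subseteq S\}\in\rho$, with $\prod_{T\subseteq S}Z_T=\prod_{T\in R}Z_T$. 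Hence the inner sum equals $\sum_{\rho\in\pset{\tau}}\mu_{\pset{\tau}}(\rho,\hat 1)\prod_{R\in\rho}\Ex\bigl[\prod_{T\in R}Z_T\bigr]$, which by the inverted moment–cumulant formula applied to the family $(Z_T)_{T\in\tau}$ is exactly $\cum{Z_T\colon T\in\tau}=\cum{\cumc{X_b\colon b\in B}{Y}\colon B\in\tau}$. Summing over $\tau\in\pseti{d}$ then yields the asserted identity. I expect the only genuine subtlety to be this last bookkeeping step — verifying the lattice isomorphism $[\tau,\hat 1]\cong\pset{\tau}$ and the matching of the Möbius coefficients — while everything else is a mechanical combination of the two moment–cumulant formulas with the tower property.
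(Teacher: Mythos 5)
Your argument is correct, but note that the paper itself offers no proof of this proposition: it is quoted verbatim as Proposition 4.4 of Speed's paper (with the result attributed to Brillinger) and then used as a black box, so there is no internal proof to compare against. What you have written is essentially Speed's own partition-lattice argument. The three ingredients --- the two M\"obius-inverse forms of the moment--cumulant relation, the tower property applied blockwise, and the identification of the interval $[\tau,\hat 1]$ with the partition lattice $\pset{\tau}$ of the blocks of $\tau$ together with the matching of M\"obius coefficients --- are exactly the right ones, and the regrouping of a tuple $(\tau_S)_{S\in\sigma}$ of blockwise partitions into a single refinement $\tau\le\sigma$ is the step that makes the double sum collapse; your explicit formula $\mu(\sigma,\hat 1)=(-1)^{\abs{\sigma}-1}(\abs{\sigma}-1)!$ is in fact never needed, since only the abstract inversion and the interval isomorphism enter. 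The one point you (like the paper) leave implicit is regularity: the conditional moment--cumulant inversion is applied under $\Ex[\,\cdot\mid Y]$, so one needs the conditional moment generating function to be finite near the origin almost surely and the quantities $\Ex\bigl[\prod_{T\in\tau_S}Z_T\bigr]$ to be finite; under the paper's standing assumption that $M_{(X_1,\ldots,X_d)}$ has positive radius of convergence this is a routine matter, but it deserves a sentence. With that caveat recorded, your proof is complete and is the standard one for this identity.
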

			In what follows the notation a[i times] is a shorthand for i consecutive symbols
			“a” seperated by commas.
			
			\section{Results}\label{sec:results}
			Before we work out the cumulants of $S_{n}$ let us have a closer look at the $G_{k}.$ Recall the well-known fact that the mixture of a Poisson distribution with gamma mixing distribution yields a negative binomial distribution. More precisely, if $X$ is a gamma random variable with parameters $a, b>0,$ i.e.~$X$ has density $t\mapsto b^{a}t^{a-1}e^{-tb}/\Gamma(a)1\{t>0\},$ and $G$ is a random variable such that
			\begin{align}
			  (G|X=x)\sim \Pn(x),
			\end{align}
			that is, conditionally on $X=x,$ $G$ has a Poisson distribution with parameter $x,$ then $G$ is supported on $\mathbb N_{0}$ and, setting $p\coloneqq b/(1+b)\in (0, 1],$ $G$ has probability mass function
			\begin{align}
			  \Prob{G=k} &= \binom{a+k-1}{k}(1-p)^{a}p^{k}\quad (k\in\mathbb N_{0}).
			\end{align}
			We say that $G$ has a \emph{negative binomial distribution} with parameters $a$ and $p.$ Since $\theta k\tau_{k}/2$ has an exponential distribution with parameter $2\binom{k}{2}/k\theta=(k-1)/\theta$, $G_{k}$ has a negative binomial distribution with parameters $1$ and $(k-1)/(k-1+\theta)$, which is nothing but a geometric distribution with success probability $\theta/(k-1+\theta)$, as we have seen earlier.
			
			Somewhat surprisingly, the author could not find an explicit expression (as opposed to recursive expressions) for the cumulants of a negative binomial distribution in the literature. It turns out that the higher order cumulants of the negative binomial distribution can be expressed in terms of the polylogarithm, as we will show in Proposition~\ref{prop:negative_binomial_cumulants}.
			
			The \emph{polylogarithm of non-integral order}, also known as \emph{Jonqui\`{e}res function}, is defined for complex $s$ and $u\in (-1, 1)$ by
			\begin{align}\label{def:polylogarithm}
			  \Li{s}{u}\coloneqq \sum_{l\geq 1}\frac{u^{l}}{l^{s}}.
			\end{align}
			It is a particular case of \emph{Lerch's function}, also known as \emph{Lerch transcendent},
			\[\Phi(u, s, \alpha) \coloneqq \sum_{l=0}^\infty \frac{u^l}{(l+\alpha)^s}\qquad u\in  (-1, 1), -\alpha\notin\mathbb N_0,\]
			defined for complex $u, s$ and $\alpha.$ In fact, for $\alpha=1$ one obtains $\Li{s}{u}=s\Phi(u, s, 1)$. Notice the particular cases $\Li{0}{u}=u/(1-u),$ $\Li{1}{u}=-\log(1-u),$ and for $\Re s>1,$ $\Li{s}{1}=\sum_{l\geq 1}1/l^s=\zeta(s),$ the \emph{Riemann zeta function}. Lewin~\cite{Lewin1981, Lewin1991} reviews the polylogarithm function. Zagier~\cite{Zagier2007} provides a detailed discussion of the \emph{dilogarithm}, i.e.~$\Li{2}{u}$, and another overview of its properties together with applications in mathematics and physics may be found in~\cite{Maximon2003}. In what follows we will consider the polylogarithm of negative order, i.e.~the special case where $-s\in\mathbb N_0$.
			
			\begin{proposition}[Cumulants of negative binomial distribution]\label{prop:negative_binomial_cumulants}
			Let $N$ be a random variable with negative binomial distribution with parameters $a>0$ and $p\in [0, 1).$ Then the $i$th cumulant of $N$ is given by
			\begin{align}
			  \cumh{i}{N} &= a\Li{1-i}{p}.
			\end{align}
			In particular, $\Ex N=\cumh{1}{N}=ap/(1-p),$ and $\Var(N)=\cumh{2}{N}=ap/(1-p)^2.$
			\end{proposition}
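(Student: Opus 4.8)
The plan is to compute the cumulant generating function $K_N(s)=\log M_N(s)$ of $N$ in closed form and simply read off its Taylor coefficients at $0$. First I would record the moment generating function: from the negative binomial pmf and the generalized binomial series $\sum_{k\geq 0}\binom{a+k-1}{k}z^k=(1-z)^{-a}$ (valid for $|z|<1$), one gets
\[M_N(s)=\Ex e^{sN}=(1-p)^a\sum_{k\geq 0}\binom{a+k-1}{k}(pe^s)^k=\left(\frac{1-p}{1-pe^s}\right)^a,\]
which converges for $s$ in a neighbourhood of $0$ (precisely whenever $pe^s<1$, and $-\log p>0$ since $p<1$). Taking logarithms and using the identity $\Li{1}{u}=-\log(1-u)$ recorded just after~\eqref{def:polylogarithm},
\[K_N(s)=a\log(1-p)-a\log(1-pe^s)=a\log(1-p)+a\Li{1}{pe^s}.\]

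The heart of the argument is then to expand $\Li{1}{pe^s}$ as a power series in $s$ and recognize the coefficients as polylogarithms of negative order. Writing $\Li{1}{pe^s}=\sum_{l\geq 1}p^le^{ls}/l$ and inserting $e^{ls}=\sum_{i\geq 0}(ls)^i/i!$, an interchange of the two summations yields
\[\Li{1}{pe^s}=\sum_{i\geq 0}\frac{s^i}{i!}\sum_{l\geq 1}\frac{p^ll^i}{l}=\sum_{i\geq 0}\frac{s^i}{i!}\,\Li{1-i}{p},\]
because $\sum_{l\geq 1}p^ll^{i-1}=\Li{1-i}{p}$ directly from the definition of the polylogarithm of order $1-i\leq 1$ (which converges for $|p|<1$ since $l^{i-1}$ grows only polynomially). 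Substituting back,
\[K_N(s)=a\log(1-p)+a\Li{1}{p}+\sum_{i\geq 1}\frac{s^i}{i!}\,a\Li{1-i}{p},\]
and the constant term vanishes because $\Li{1}{p}=-\log(1-p)$. Comparing with $K_N(s)=\sum_{i\geq 1}\cumh{i}{N}\,s^i/i!$ gives $\cumh{i}{N}=a\Li{1-i}{p}$. To finish, I would evaluate the small negative orders: $\Li{0}{p}=p/(1-p)$ yields $\Ex N=\cumh{1}{N}=ap/(1-p)$, and $\Li{-1}{p}=\sum_{l\geq 1}lp^l=p/(1-p)^2$ yields $\Var(N)=\cumh{2}{N}=ap/(1-p)^2$, recovering the classical formulas.

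The only step requiring genuine care — rather than routine algebra — is the justification of the term-by-term rearrangement producing the Taylor expansion of $K_N$ at $0$. For this I would invoke Tonelli/Fubini: for $|s|$ small enough that $e^{|s|}p<1$ the double series $\sum_{l\geq 1}\sum_{i\geq 0}p^ll^{i-1}|s|^i/i!=\sum_{l\geq 1}p^ll^{-1}e^{l|s|}$ converges absolutely, so the interchange of summations is legitimate and the resulting power series is indeed the analytic expansion of $K_N$ near $0$ with the stated coefficients. Everything else is a direct computation.
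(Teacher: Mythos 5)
Your proof is correct and coincides with the paper's own second proof of Proposition~\ref{prop:negative_binomial_cumulants}: compute the moment generating function, take logarithms, expand $-\log(1-pe^s)$ as a double series, interchange the sums, and identify the coefficient of $s^i/i!$ as $a\Li{1-i}{p}$ (your explicit Fubini justification of the interchange is a welcome addition the paper omits). The paper's primary proof takes a different route via the Poisson--gamma mixture and the law of total cumulance, but since the paper records your cgf argument verbatim as an alternative, no further comparison is needed.
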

			We write ${n\brace k}$ for the $(n, k)$th \emph{Stirling number of the second kind} counting the number of partitions into $k$ blocks of a set of size $n$. In order to prove Proposition~\ref{prop:negative_binomial_cumulants} we need the following Lemma.
			\begin{lemma}
				For the polylogarithm of negative order we have
				\begin{align}\label{eq:polylog}
				  \Li{-n}{u} &= \sum_{k=0}^n k!{n+1\brace k+1}\left(\frac{u}{1-u}\right)^{k+1}\qquad (n\in\mathbb N_0).
				\end{align}
			\end{lemma}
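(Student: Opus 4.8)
The plan is to prove the identity \eqref{eq:polylog} by a generating-function argument, recognizing that the right-hand side is exactly the formula one gets when differentiating a geometric series repeatedly. First I would recall the classical recursion $\Li{-n}{u} = \left(u\frac{d}{du}\right)\Li{-n+1}{u}$, valid for $n\geq 1$, together with the base case $\Li{0}{u} = u/(1-u)$ (already noted in the excerpt). Iterating the operator $u\,d/du$ applied to $u/(1-u)$ will produce a polynomial in $v\coloneqq u/(1-u)$ of degree $n+1$, and the coefficients of that polynomial are the quantities we must identify with $k!{n+1\brace k+1}$.

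The key step is to track how the operator $u\,d/du$ acts after the substitution $v = u/(1-u)$. A short computation gives $\frac{dv}{du} = \frac{1}{(1-u)^2} = v^2/u \cdot (1-u) = v(1+v)/u$ — more cleanly, since $u = v/(1+v)$ one finds $u\frac{d}{du} = v(1+v)\frac{d}{dv}$. So the claim reduces to showing that, writing $D\coloneqq v(1+v)\frac{d}{dv}$, we have $D^n v = \sum_{k=0}^n k!{n+1\brace k+1} v^{k+1}$ for all $n\in\mathbb N_0$. I would prove this by induction on $n$: the case $n=0$ is trivial, and for the inductive step one applies $D$ to $\sum_k k!{n+1\brace k+1}v^{k+1}$, using $D(v^{k+1}) = (k+1)v^{k+1}(1+v) = (k+1)v^{k+1} + (k+1)v^{k+2}$, and then matches coefficients of $v^{j+1}$ on both sides. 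This produces exactly the Stirling recursion ${n+2\brace j+1} = (j+1){n+1\brace j+1} + {n+1\brace j}$ (after dividing through by $j!$ and noting $(k+1)\cdot k! = (k+1)!$ with $k+1 = j$ in the term coming from $D(v^{k+2})$ shifted down), which closes the induction.

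An alternative, perhaps cleaner, route is to invoke the known closed form $\sum_{l\geq 0} l^n x^l$ in terms of Eulerian polynomials, or simply the combinatorial identity $\Li{-n}{u} = \sum_{l\geq 1} l^n u^l$ together with the classical expansion $l^n = \sum_{k} {n\brace k} \fallfac{l}{k}$ of powers into falling factorials; then one uses $\sum_{l\geq 1}\fallfac{l}{k}u^l = \sum_{l\geq k}\fallfac{l}{k}u^l$, reindexed, which after a standard manipulation equals $k!\, u^k/(1-u)^{k+1} = k!\,(u/(1-u))^k \cdot (1-u)^{-1}$; collecting the $(1-u)^{-1}$ factor into the power turns $(u/(1-u))^k/(1-u)$ into a combination that, after relabeling, reproduces $\sum_k k!{n+1\brace k+1}(u/(1-u))^{k+1}$. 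The bookkeeping here — in particular getting the shift from ${n\brace k}$ to ${n+1\brace k+1}$ right — is the main obstacle; it is purely the standard identity ${n+1\brace k+1} = \sum_{j} \binom{n}{j}{j\brace k}$ evaluated appropriately, or equivalently the observation that summing a falling-factorial series against $u^l$ for $l\geq 1$ (rather than $l\geq 0$) is what promotes the index. I would present whichever of the two derivations is shorter, probably the operator/induction argument, since it keeps the Stirling recursion front and center and avoids reindexing pitfalls.
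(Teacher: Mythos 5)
Your proof is correct and follows essentially the same route as the paper's: induction on $n$ driven by the recursion $\Li{-(n+1)}{u}=u\frac{d}{du}\Li{-n}{u}$ together with the Stirling recurrence ${n+2\brace j+1}=(j+1){n+1\brace j+1}+{n+1\brace j}$, your substitution $v=u/(1-u)$ merely repackaging the paper's direct computation of $\frac{d}{du}\,u^m(1-u)^{-m}$. The only blemish is the intermediate equality $\frac{dv}{du}=\frac{v^2}{u}(1-u)$, which is false (that expression equals $u/(1-u)$, not $1/(1-u)^2$), but the operator identity $u\frac{d}{du}=v(1+v)\frac{d}{dv}$ you actually use is right and the induction closes as claimed.
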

			\begin{proof}
				We prove the statement by induction on $n$. The polylogarithm of order zero is nothing but the geometric series, hence for $n=0$ we have
				$\Li{0}{u} = \sum_{l\geq 1} u^l = u/(1-u)$
				in agreement with the right hand side in~\eqref{eq:polylog}. To conclude the induction step, suppose that the statement holds for some nonnegative integer $n.$ Taking the derivative of the polylogarithm with respect to $u$ yields the recursion
				\[\frac{d}{du}\Li{-n}{u}=\frac{d}{du}\sum_{l\geq 1} u^ll^n=\sum_{l\geq 1}u^{l-1}l^{n+1}=u^{-1}\sum_{l\geq 1}u^ll^{n+1} = u^{-1}\Li{-(n+1)}{u}.\]
				This recursion together with the initial value $\Li{0}{u}$ completely determines the polylogarithm of negative order. If we can show that the right hand side in~\eqref{eq:polylog},
				\begin{align*}
				  M_{-n}(u) \coloneqq \sum_{k=0}^n k!{n+1\brace k+1}\left(\frac{u}{1-u}\right)^{k+1}\qquad (n\in\mathbb N_0),
				\end{align*}
				satisfies the same recursion, the claim follows. Notice that for $m\in\mathbb N$
				\begin{align*}
				\frac{d}{du}u^m(1-u)^{-m} &= mu^{m-1}(1-u)^{-m}+mu^m(1-u)^{-m-1}\\
				&=u^{-1}\left(m\left(\frac{u}{1-u}\right)^m+m\left(\frac{u}{1-u}\right)^{m+1}\right).
				\end{align*}
				Therefore,
				\begin{align*}
				  \frac{d}{du}M_{-n}(u) &= \frac{d}{du}\sum_{k=0}^n k!{n+1\brace k+1}\left(\frac{u}{1-u}\right)^{k+1}\\
				  &= u^{-1}\sum_{k=0}^n k!{n+1\brace k+1}\left((k+1)\left(\frac{u}{1-u}\right)^{k+1} + (k+1)\left(\frac{u}{1-u}\right)^{k+2}\right)\\
				  &= u^{-1}\sum_{k=0}^n (k+1)!{n+1\brace k+1}\left(\frac{u}{1-u}\right)^{k+1}\left(1+\frac{u}{1-u}\right)\\
				  &= \frac{1}{u(1-u)}\sum_{k=0}^n (k+1)!{n+1\brace k+1}\left(\frac{u}{1-u}\right)^{k+1}\\
				  &= u^{-1}M_{-(n+1)}(u),
				\end{align*}
				where the last equation is seen as follows. Recall for $k>0$ the recurrence relation ${n+1\brace k}={n\brace k-1}+{n\brace k}k$ for the Stirling numbers of the second kind. This implies
				\begin{align*}
				  M_{-(n+1)}(u) &= \sum_{k=0}^{n+1} k!{n+2\brace k+1}\left(\frac{u}{1-u}\right)^{k+1}\\
				  &= \sum_{k=0}^{n+1} k!{n+1\brace k}\left(\frac{u}{1-u}\right)^{k+1}+\sum_{k=0}^{n+1} k!(k+1){n+1\brace k+1}\left(\frac{u}{1-u}\right)^{k+1}\\
				  &= \sum_{k=1}^{n+1} k!{n+1\brace k}\left(\frac{u}{1-u}\right)^{k+1}+\sum_{k=0}^{n} (k+1)!{n+1\brace k+1}\left(\frac{u}{1-u}\right)^{k+1},\\
				\end{align*}
				since ${n+1\brace 0}={n+1\brace n+2}=0.$ Moreover, we have
				\begin{align*}
				  \sum_{k=1}^{n+1} k!{n+1\brace k}\left(\frac{u}{1-u}\right)^{k+1} &= \sum_{k=0}^n (k+1)!{n+1\brace k+1}\left(\frac{u}{1-u}\right)^{k+2}\\
				  &= \frac{u}{1-u}\sum_{k=0}^n (k+1)!{n+1\brace k+1}\left(\frac{u}{1-u}\right)^{k+1},
				\end{align*}
				and consequently,
				\[M_{-(n+1)}(u) = \frac{1}{1-u}\sum_{k=0}^n (k+1)!{n+1\brace k+1}\left(\frac{u}{1-u}\right)^{k+1}.\]
			\end{proof}
			
			\begin{proof}(of Proposition 2)
			Let $\tau$ denote a random variable with $\Gam(\alpha, \beta)$ distribution. Conditionally given $\tau$ let $X$ be a random variable following a $\Pn(\tau)$ distribution.
			  Recall that the $j$th cumulant of a random variate with $\Pn(\alpha)$ law is $\alpha$ for all $j\in\mathbb N$, and the $j$th cumulant of a random variate that obeys a $\Gam(\alpha, \beta)$ law is $\alpha(j-1)!/\beta^j$, $j\in\mathbb N.$ As we have seen earlier, choosing $\beta\coloneqq p/(1-p)$, $X$ and $N$ are equal in distribution. Using this construction and applying the law of total cumulance, Proposition~\ref{prop:total_cumulance}, we find
			  \begin{align*}
			    \cumh{i}{X} &= \cum{X[i\text{ times}]} = \sum_{\pi\in\mathscr P_{[i]}} \cum{\cumhc{\card B}{X}{\tau}\colon B\in\pi}
			    = \sum_{\pi\in\mathscr P_{[i]}} \cum{\tau \colon B\in\pi}\\
			    &= \sum_{\pi\in\mathscr P_{[i]}} \cumh{\card \pi}{\tau} = \alpha\sum_{b=1}^i \frac{(b-1)!}{\beta^b}{i\brace b}=\alpha\Li{1-i}{p}.
			  \end{align*}
			\end{proof}
			
			Knowledge of the formula in Propostion~\ref{prop:negative_binomial_cumulants} for the cumulant of the negative binomial distribution can guide one to find an alternative proof via the cumulant generating function of $N$.
			
			\begin{proof} (second proof of Proposition~\ref{prop:negative_binomial_cumulants})
				The moment generating function of $N$ is given by
				\begin{align*}
				\Ex e^{tN} &= \left(\frac{1-p}{1-pe^t}\right)^\alpha\qquad (t<-\log p).
				\end{align*}
				Thus, the cumulant generating function of $N$ is
				\begin{align*}
				\log \Ex e^{tX} &= \alpha\log\frac{1-p}{1-pe^t} = \alpha(\log (1-p)-\log (1-pe^t)).
				\end{align*}
				Now, for $t<-\log p,$ using the Taylor series of the ordinary logarithm around 1, $\log\frac{1}{1-x}=\sum_{k\geq 1}x^k/k$ for $-1\leq x<1$, one obtains
				\begin{align*}
				-\log(1-pe^t) &= \sum_{k\geq 1}\frac{(pe^t)^k}{k} = \sum_{k\geq 1}\frac{p^k}{k}\sum_{l\geq 0}\frac{(tk)^l}{l!} =\sum_{l\geq 0}\frac{t^l}{l!}\sum_{k\geq 1}p^kk^{l-1}\\
				&= \sum_{k\geq 1}\frac{p^k}{k}+\sum_{l\geq 1}\frac{t^l}{l!}\Li{{1-l}}{p} = -\log(1-p) + \sum_{l\geq 1}\frac{t^l}{l!}\Li{1-l}{p},
				\end{align*}
				and the claim follows.
			\end{proof}
			
			We now turn to the cumulants of $S_{n}.$
			\begin{theorem}[Cumulants of number of segregating sites]\label{thm:sites_cumulants}
			The $i$th cumulant of the total number $S_{n}$ of segregating sites is given by
			\begin{align}
			  \cumh{i}{S_{n}} &= \sum_{k=1}^{n-1} \Li{1-i}{\frac{\theta}{k+\theta}}= \sum_{b=1}^{i}{i\brace b}(b-1)!\theta^{b}H_{n-1}^{(b)},
			\end{align}
			where $H_{n}^{(b)}\coloneqq \sum_{k=1}^{n}1/k^{b}$ denotes the \emph{generalized harmonic number}. In particular, we have
			\begin{align}
			  \Ex S_{n} &= \theta H_{n-1},\\
			  \Var(S_{n}) &= \theta H_{n-1} + \theta^{2}H_{n-1}^{(2)},
			\end{align}
			in agreement with~\cite[equations (1.20) and (1.22)]{Durrett2008}.
			\end{theorem}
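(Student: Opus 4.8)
The plan is to combine three things already established: the representation \eqref{eq:sites_sum} of $S_n$ as a sum of independent summands, the additivity of cumulants over independent random variables (property iii)), and the closed form for the cumulants of a negative binomial law from Proposition~\ref{prop:negative_binomial_cumulants}; the translation of the polylogarithmic expression into Stirling numbers and harmonic numbers is then supplied by the Lemma on the polylogarithm of negative order. All of the analytic content has in fact already been done in that proposition and lemma, so what remains is essentially bookkeeping.

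Concretely, I would first record that for $2\le k\le n$ the random variable $G_k$ is a negative binomial variate in the parametrization of Proposition~\ref{prop:negative_binomial_cumulants} with parameters $a=1$ and $p_k:=\theta/(k-1+\theta)$; indeed this is exactly the geometric law on $\mathbb N_0$ with mean $\theta/(k-1)=p_k/(1-p_k)$ noted in Section~\ref{sec:preliminaries}. Since $S_n$ is a finite sum of such variables its moment generating function exists on a neighbourhood of $0$, so $\cumh{i}{S_n}$ is well defined, and by independence of $G_2,\dots,G_n$ together with property iii) of cumulants,
\begin{align*}
  \cumh{i}{S_n}=\sum_{k=2}^n\cumh{i}{G_k}=\sum_{k=2}^n\Li{1-i}{\frac{\theta}{k-1+\theta}}=\sum_{k=1}^{n-1}\Li{1-i}{\frac{\theta}{k+\theta}},
\end{align*}
where the last step is the index shift $k\mapsto k+1$. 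This is the first asserted expression.

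For the second expression I would feed $u=\theta/(k+\theta)$ into the Lemma on $\Li{-n}{u}$. The key simplification is that $1-u=k/(k+\theta)$, hence $u/(1-u)=\theta/k$; taking $n=i-1$ in the Lemma and relabelling the summation index via $b\mapsto b+1$ gives
\begin{align*}
  \Li{1-i}{\frac{\theta}{k+\theta}}=\sum_{b=1}^{i}(b-1)!{i\brace b}\Bigl(\frac{\theta}{k}\Bigr)^{b}.
\end{align*}
Summing this over $k=1,\dots,n-1$ and interchanging the two finite sums yields $\sum_{b=1}^i{i\brace b}(b-1)!\,\theta^b\sum_{k=1}^{n-1}k^{-b}=\sum_{b=1}^i{i\brace b}(b-1)!\,\theta^b H_{n-1}^{(b)}$, which is the claimed closed form. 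Finally, specialising to $i=1$ (using ${1\brace 1}=1$) recovers $\Ex S_n=\theta H_{n-1}$, and to $i=2$ (using ${2\brace 1}={2\brace 2}=1$ and $0!=1!=1$) recovers $\Var(S_n)=\theta H_{n-1}+\theta^2 H_{n-1}^{(2)}$. I do not anticipate a genuine obstacle here: the only points requiring care are the two index shifts $k\mapsto k+1$ and $b\mapsto b+1$ and the routine check that the relevant moment generating functions converge near the origin, so that the cumulants exist and additivity over independent summands applies.
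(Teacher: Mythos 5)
Your proposal is correct and follows essentially the same route as the paper: decompose $S_n$ into the independent geometric variables $G_k$, apply additivity of cumulants, invoke Proposition~\ref{prop:negative_binomial_cumulants} for $\cumh{i}{G_k}$, and convert the polylogarithm to the Stirling/harmonic-number form via the Lemma with $u/(1-u)=\theta/k$. Your identification of the parameter as $p_k=\theta/(k-1+\theta)$ (so that $p_k/(1-p_k)=\theta/(k-1)$ is the mean) is the correct one and matches what the paper's own proof actually uses.
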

			\begin{proof}
			Recall that $G_k$ follows a geometric distribution with success probability $(k-1)/(k-1+\theta)$. Hence, as in the proof of Proposition~\ref{prop:negative_binomial_cumulants} we have
			\begin{align*}
			  \cumh{i}{G_{k}} &= \sum_{b=1}^{i}{i\brace b}(\frac{\theta}{k-1})^{b}(b-1)!=\Li{1-i}{\frac{\theta}{k-1+\theta}},
			\end{align*}
			Since the summands $G_{k}$ in~\eqref{eq:sites_sum} are independent, we have
			\begin{align}
			\cumh{i}{S_{n}}=\sum_{k=2}^{n} \cumh{i}{G_{k}} = \sum_{b=1}^{i}{i\brace b}\theta^{b}(b-1)!\sum_{k=1}^{n-1}1/k^{b}.
			\end{align}
			The claim follows.
			\end{proof}
			We now apply Theorem~\ref{thm:sites_cumulants} to derive the asymptotic behaviour of the number of segregating sites $S_n$ as the sample size grows without bounds.
			
			\section{Applications}\label{sec:applications}
			We first rederive the Law of Large Numbers for $S_n$, Theorem~\ref{thm:lln}.
			
%			\begin{theorem}
%			We have convergence in distribution
%			\[\frac{S_n}{\theta H_{n-1}}\to 1\]
%			as $n\to\infty.$
%			\end{theorem}
			\begin{proof} (of the Law of Large Numbers)
			For a sequence of random variables almost sure convergence to a constant is equivalent to convergence in distribution.
			
			It is enough to show convergence of all cumulants as long as the limiting distribution is uniquely determined by its cumulants (see e.g.~\cite{Grimmett1992}), which is trivially the case in our statement. For any integer $i\geq 1$ we find
			  \begin{align*}
			    \cumh{i}{\frac{S_n}{\theta H_{n-1}}}=\sum_{b=1}^i {i\brace b}(b-1)!\theta^b\frac{H_{n-1}^{(b)}}{(\theta H_{n-1})^i}\to \begin{cases}
			      1 & \text{if }i=1,\\
			      0 & \text{if }i>1,
			    \end{cases}
			  \end{align*}
			  and the claim follows.
			\end{proof}
			
			Similarly, we obtain a derivation of the Central Limit Theorem for $S_{n},$ Theorem~\ref{thm:clt}.
			
%			\begin{theorem}[Central Limit Theorem for number of segregating sites]\label{thm:sites_limit}
%			For the number $S_{n}$ of segregating sites let $\mu_n\coloneqq \theta H_{n-1}$ denote its mean, and let $\sigma_n^2\coloneqq \theta H_{n-1}+\theta^2 H_{n-1}^{(2)}$ denote its variance. Then, as $n\to\infty,$ we have
%			\begin{align}
%			  \frac{S_{n}-\mu_n}{\sigma_n}\to N,
%			\end{align}
%			where the convergence holds for all moments,  and $N$ is governed by a standard normal distribution.
%			\end{theorem}
			\begin{proof} (of Theorem~\ref{thm:clt})
			Recall that $H_n\sim\log n$ and $H_{n}^{(b)}\to\zeta(b)$ for $b>1$ as $n\to\infty.$ For $i> 2$ Theorem~\eqref{thm:sites_cumulants} implies
			\begin{align*}
			  \cumh{i}{(S_{n}-\Ex S_n)/\sqrt{\Var(S_n)}} &=\cumh{i}{S_n}/(\sqrt{\Var(S_n)})^i \\&=\sum_{b=1}^{i}\frac{{i\brace b}(b-1)!\theta^{b}H_{n-1}^{(b)}}{(\theta H_{n-1}+\theta^2 H_{n-1}^{(2)})^{\frac{i}{2}}}\\
			  &\sim \theta^{1-i/2}H_{n-1}^{1-i/2}
			  \sim\frac{\theta^{1-i/2}}{(\log n)^{(i-2)/2}}\to 0,
			\end{align*}
			as $n\to\infty.$ From Theorem~\eqref{thm:sites_cumulants} it follows that the mean and variance of ${(S_n-\Ex S_n)/\sqrt{\Var (S_n)}}$ are $0$ and $1$, respectively. Since the $i$th moment of a random variable is a continuous function of its first $i$ cumulants, the claim follows.
			\end{proof}
			
			{\bf Acknowledgements.} The author thanks Martin M\"{o}hle for pointing out the second proof of Proposition~\ref{prop:negative_binomial_cumulants}.
			
			\bibliographystyle{alpha}
			\bibliography{literature}
			
			\end{document}